\tikzset{node distance=2cm, auto}
\newtheorem{theorem}{Theorem}
\newtheorem{corollary}[theorem]{Corollary}
\newtheorem{remark}[theorem]{Remark}
\newtheorem{example}[theorem]{Example}
\def\N{\mathbb{N}}
\def\Z{\mathbb{Z}}
\def\R{\mathbb{R}}
\def\K{\mathbb{K}}
\def\T{\mathbb{T}}
\def\diam{\mathrm{diam}}
\def\FX{\mathcal{F}(X)}
\def\Lipo{\mathrm{Lip}_0}
\def\lipo{\mathrm{lip}_0}
\def\Lip{\mathrm{Lip}}
\def\lip{\mathrm{lip}}
\begin{document}
\title[Biduality and density in Lipschitz function spaces]{Biduality and density in Lipschitz function spaces}

\author{A. Jim\'{e}nez-Vargas}
\address{Departamento de Matem\'{a}ticas, Universidad de Almer\'{i}a, 04120 Almer\'{i}a, Spain}
\email{ajimenez@ual.es}
\date{\today}
\thanks{This research was partially supported by MICINN under project MTM 2010-17687.}

\author{J.M. Sepulcre}
\address{Departamento de An\'{a}lisis Matem\'{a}tico, Universidad de Alicante, 03080 Alicante, Spain}
\email{JM.Sepulcre@ua.es}

\author{Mois\'{e}s  Villegas-Vallecillos}
\address{Campus Universitario de Puerto Real, Facultad de Ciencias, 11510 Puerto Real, C\'{a}diz, Spain}
\email{moises.villegas@uca.es}

\subjclass[2010]{46E10, 46E15, 46J10}
\keywords{Lipschitz function, little Lipschitz function, H\"{o}lder function, Lipschitz-free Banach space}

\begin{abstract}
For pointed compact metric spaces $(X,d)$, we address the biduality problem as to when the space of Lipschitz functions $\Lipo(X,d)$ is isometrically isomorphic to the bidual of the space of little Lipschitz function $\lipo(X,d)$, and show that this is the case whenever the closed unit ball of $\lipo(X,d)$ is dense in the closed unit ball of $\Lipo(X,d)$ with respect to the topology of pointwise convergence. Then we apply our density criterion to prove in an alternate way the real version of a classical result which asserts that $\Lipo(X,d^\alpha)$ is isometrically isomorphic to $\lipo(X,d^\alpha)^{**}$ for any $\alpha\in(0,1)$. 
\end{abstract}
\maketitle

\section*{Introduction}
Let $(X,d)$ be a pointed compact metric space with a base point denoted by $0$ and let $\mathbb{K}$ be the field of real or complex numbers. The Lipschitz space $\Lipo(X,d)$ is the Banach space of all Lipschitz functions $f\colon X\to\K$ for which $f(0)=0$, endowed with the Lipschitz norm
$$
\Lip_d(f)=\sup\left\{\frac{\left|f(x)-f(y)\right|}{d(x,y)}\colon x,y\in X, \; x\neq y\right\}.
$$
A Lipschitz function $f\colon X\to\K$ satisfying the local flatness condition: 
$$
\lim_{t\to 0}\sup_{0<d(x,y)<t}\frac{\left\|f(x)-f(y)\right\|}{d(x,y)}=0,
$$
is called a little Lipschitz function, and the little Lipschitz space $\lipo(X,d)$ is the closed subspace of $\Lipo(X,d)$ formed by all little Lipschitz functions. Furthermore, $\Lipo^\R(X,d)$ and $\lipo^\R(X,d)$ are the real subspaces of all real-valued functions in $\Lipo(X,d)$ and $\lipo(X,d)$, respectively. These spaces have been largely investigated along the time. See the Weaver's book \cite{weaver} for references and a complete study.

The biduality problem as to when $\Lipo(X,d)$ is isometrically isomorphic to $\lipo(X,d) ^{**}$ has an interesting history (see \cite[p. 99, Notes 3.3]{weaver} and also \cite[6. Duality]{k04}). In this note, we address this question in a similar way as Bierstedt and Summers \cite{bs} do for studying the biduals of weighted Banach spaces of analytic functions, and we prove that $\Lipo(X,d)$ is isometrically isomorphic to $\lipo(X,d) ^{**}$ if and only if the closed unit ball of $\lipo(X,d)$ is dense in the closed unit ball of $\Lipo(X,d)$ with respect to the topology of pointwise convergence $\tau_p$. This density condition is equivalent to requiring that for each $f\in\Lipo(X,d)$ with $\Lip_d(f)\leq 1$, there exists a sequence $\{f_n\}$ in $\lipo(X,d)$ with $\Lip_d(f_n)\leq 1$ for all $n\in\N$ such that $\{f_n(x)\}$ converges to $f(x)$ as $n\to\infty$ for every $x\in X$. Then we apply our density criterion to prove in an alternate way the real version of a classical Johnson's result \cite{j70} (see also \cite{bcd,d,weaver}) which asserts that $\Lipo(X,d^\alpha)$ is isometrically isomorphic to $\lipo(X,d^\alpha) ^{**}$ for any $\alpha\in(0,1)$. 

\section*{The results}

Johnson \cite{j70} proved that the closed linear subspace of $\Lipo(X,d)^*$ spanned by the evaluation functionals $\delta_x\colon\Lipo(X,d)\to\K$, given by
$\delta_x(f)=f(x)$ with $x\in X$, is a predual of $\Lipo(X,d)$. The terminology Lipschitz-free Banach space of $X$ and the notation $\FX$ for this predual of $\Lipo(X,d)$ are due to Godefroy and Kalton \cite{gk}. Namely, the evaluation map $Q_X\colon\Lipo(X,d)\to\FX^*$ defined by
$$
Q_X(f)(\gamma)=\gamma(f) \qquad \left(f\in\Lipo(X,d), \; \gamma\in\FX \right)
$$
is the natural isometric isomorphism. As usual, $B_E$ will denote the closed unit ball of a Banach space $E$.

\begin{theorem}\label{messi1}
Let $(X,d)$ be a pointed compact metric space. 
\begin{enumerate}
\item The restriction map $R_X\colon\FX\to\lipo(X,d)^{*}$ defined by
$$
R_X(\gamma)(f)=\gamma(f) \qquad \left(f\in\lipo(X,d), \; \gamma\in\FX \right),
$$
is a nonexpansive linear surjective map.
\item $R_X$ is an isometric isomorphism from $\FX$ onto $\lipo(X,d)^{*}$ if and only if $B_{\lipo(X,d)}$ is dense in $B_{\Lipo(X,d)}$ with respect to the topology of pointwise convergence.
\end{enumerate}
\end{theorem}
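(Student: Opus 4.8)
The plan is to dispose of the elementary parts of (i) first, then to reduce (ii) to the single assertion that $R_X$ preserves norms, so that the real content splits into two pieces: the surjectivity in (i) and the ball-density characterization in (ii). Linearity of $R_X$ is immediate, and nonexpansiveness follows because $\lipo(X,d)$ carries the norm induced from $\Lipo(X,d)$: for $\gamma\in\FX\subseteq\Lipo(X,d)^*$ one has $\|R_X(\gamma)\|=\sup\{|\gamma(f)|:f\in B_{\lipo(X,d)}\}\le\sup\{|\gamma(f)|:f\in B_{\Lipo(X,d)}\}=\|\gamma\|$, the last equality being the duality $\Lipo(X,d)=\FX^*$ furnished by $Q_X$. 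Since (i) supplies surjectivity and $R_X$ is already nonexpansive, in (ii) the map $R_X$ is an isometric isomorphism precisely when it preserves norms, i.e. when $\sup_{f\in B_{\lipo(X,d)}}|\gamma(f)|=\sup_{f\in B_{\Lipo(X,d)}}|\gamma(f)|$ for every $\gamma\in\FX$; norm preservation automatically forces injectivity.

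For surjectivity I would use de Leeuw's device. Writing $\Omega=(X\times X)\setminus\Delta$ for the complement of the diagonal, the map $\Phi$ given by $(\Phi f)(x,y)=(f(x)-f(y))/d(x,y)$ sends $\Lipo(X,d)$ isometrically into $C_b(\Omega)$ (with $\|\Phi f\|_\infty=\Lip_d(f)$); the defining flatness condition is exactly the statement that $\Phi f$ extends continuously by $0$ across the diagonal, so that $\Phi$ restricts to an isometric embedding of $\lipo(X,d)$ into $C_0(\Omega)$. Given $\varphi\in\lipo(X,d)^*$, Hahn--Banach together with the Riesz representation produces a finite regular Borel measure $\mu$ on $\Omega$ with $\varphi(f)=\int_\Omega\Phi f\,d\mu$. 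The elementary molecules $m_{x,y}=(\delta_x-\delta_y)/d(x,y)$ satisfy $\|m_{x,y}\|_{\FX}=1$ and $m_{x,y}(f)=(\Phi f)(x,y)$, and $(x,y)\mapsto m_{x,y}$ is continuous from $\Omega$ into $\FX$; hence the Bochner integral $\gamma=\int_\Omega m_{x,y}\,d\mu(x,y)$ converges in $\FX$ and satisfies $R_X(\gamma)(f)=\int_\Omega m_{x,y}(f)\,d\mu=\varphi(f)$ for all $f\in\lipo(X,d)$. This exhibits $\varphi$ as $R_X(\gamma)$ and is the step I expect to carry the most weight: one must verify the isometric range of $\Phi$, the measurability and integrability needed for the Bochner integral, and that the resulting $\gamma$ lands in the closed subspace $\FX$ rather than merely in $\Lipo(X,d)^*$.

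For (ii) I would argue by polarity in the dual pair $\langle\Lipo(X,d),\FX\rangle$. By Banach--Alaoglu $B_{\Lipo(X,d)}$ is $\sigma(\Lipo(X,d),\FX)$-compact, hence closed, and on norm-bounded sets this weak\textsuperscript{*} topology coincides with $\tau_p$, because $\FX$ is the closed linear span of the evaluations $\delta_x$. The equality of the two suprema above for every $\gamma\in\FX$ says that the balanced convex sets $B_{\lipo(X,d)}$ and $B_{\Lipo(X,d)}$ have the same support functional over $\FX$, which by Hahn--Banach separation is equivalent to their having the same $\sigma(\Lipo(X,d),\FX)$-closure. Since $B_{\Lipo(X,d)}$ is already closed and contains $B_{\lipo(X,d)}$, this is in turn equivalent to $\overline{B_{\lipo(X,d)}}^{\,\tau_p}=B_{\Lipo(X,d)}$, i.e. to the asserted $\tau_p$-density.

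Combining this last equivalence with the norm-preservation reduction from the first paragraph and the surjectivity from (i) yields the stated biconditional in (ii). The genuinely delicate points are thus confined to the surjectivity argument of (i), while in (ii) the only care needed is in the separation step that converts equality of support functionals into equality of closures, together with the routine verification that $\tau_p$ and $\sigma(\Lipo(X,d),\FX)$ agree on the bounded set $B_{\Lipo(X,d)}$.
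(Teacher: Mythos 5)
Your proposal is correct, and it diverges from the paper's proof in a way worth recording. The skeleton is shared: both proofs get surjectivity in (i) from de Leeuw's embedding $\Phi$, a Hahn--Banach extension and the Riesz representation theorem, and both rest part (ii) on the duality $\langle \Lipo(X,d),\FX\rangle$ together with the agreement of $\tau_p$ and the weak* topology on bounded sets (Johnson's Corollary~4.4). The genuine difference is how the preimage in $\FX$ is produced. The paper defines $\widetilde{\gamma}(f)=\int_{\widetilde{X}}\Phi(f)\,d\mu$ for \emph{all} $f\in\Lipo(X,d)$ and then shows $\widetilde{\gamma}\in\FX$ by checking that $\widetilde{\gamma}$ is $\tau_p$-continuous on $B_{\Lipo(X,d)}$ via the bounded convergence theorem applied to a net; this route implicitly invokes the characterization (Krein--\v{S}mulian plus Corollary~4.4) of $\FX$ as the functionals weak*-continuous on the ball, and its appeal to dominated convergence for \emph{nets} is delicate -- it really needs the metrizability of $(B_{\Lipo(X,d)},\tau_p)$ to reduce to sequences, a point the paper glosses over. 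You instead construct the preimage directly as the Bochner integral $\gamma=\int_{\widetilde{X}} m_{x,y}\,d\mu(x,y)$ of molecules, which lands in $\FX$ by construction; the price is verifying norm-continuity of $(x,y)\mapsto m_{x,y}$ on $\widetilde{X}$, separability of $\FX$, and integrability (all routine, since $\|m_{x,y}\|=1$ and $\mu$ is finite), and the payoff is a constructive preimage that bypasses both the weak*-continuity characterization of $\FX$ and the net-convergence subtlety. In part (ii) the difference is smaller: the paper proves the forward implication with an explicit net argument and the converse with an explicit Hahn--Banach separation yielding a functional $\gamma$ with $\|R_X(\gamma)\|\leq 1<\left|\gamma(g)\right|$, whereas you fold both directions into a single bipolar-theorem argument (equal support functionals over $\FX$ if and only if equal weak*-closures, with $B_{\Lipo(X,d)}$ weak*-closed by Alaoglu); this is the same mathematical content -- separation in the dual pair -- in a more economical package, at the cost of making the underlying functionals less explicit.
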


\begin{proof}
(i) Since $\FX\subset\Lipo(X,d)^{*}$, it is clear that $R_X$ is a linear map from $\FX$ into $\lipo(X,d)^{*}$ satisfying $\left\|R_X(\gamma)\right\|\leq\left\|\gamma\right\|$ for all $\gamma\in\FX$. We next prove that $R_X$ is surjective. To this end, let us recall that De Leeuw's map $\Phi\colon\Lipo(X,d)\to C_b(\widetilde{X})$ given by 
$$
\Phi(f)(x,y)=\frac{f(x)-f(y)}{d(x,y)} \qquad (f\in\Lipo(X,d), \; (x,y)\in\widetilde{X}),
$$
where $\widetilde{X}=\left\{(x,y)\in X^2\colon x\neq y\right\}$, is a linear isometry of $\Lipo(X,d)$ into $C_b(\widetilde{X})$, the Banach space of bounded continuous scalar-valued functions on $\widetilde{X}$ with the supremum norm, and the image of $\lipo(X,d)$ is contained in $C_0(\widetilde{X})$, the closed subspace of functions which vanish at infinity. See, for example, \cite[Theorem 2.1.3 and Proposition 3.1.2]{weaver}.

Take $\gamma\in\lipo(X,d)^{*}$. The functional $T\colon\Phi(\lipo(X,d))\to\K$, defined by $T(\Phi(f))=\gamma(f)$ for all $f\in\lipo(X,d)$, is linear, continuous and $\left\|T\right\|=\left\|\gamma\right\|$. By the Hahn--Banach theorem, there exists a continuous linear functional $\widetilde{T}\colon\mathcal{C}_0(\widetilde{X})\to\K$ such that $\widetilde{T}(\Phi(f))=T(\Phi(f))$ for all $f\in\lipo(X,d)$ and $||\widetilde{T}||=\left\|T\right\|$. Now, by the Riesz representation theorem, there exists a finite and regular Borel measure $\mu$ on $\widetilde{X}$ with total variation $\left\|\mu\right\|=||\widetilde{T}||$ such that
$$
\widetilde{T}(g)=\int_{\widetilde{X}}g\, d\mu \qquad (g\in\mathcal{C}_0(\widetilde{X})),
$$
and thus
$$
\gamma(f)=\int_{\widetilde{X}}\Phi(f)\, d\mu \qquad (f\in\lipo(X,d)).
$$
If we now define
$$
\widetilde{\gamma}(f)=\int_{\widetilde{X}}\Phi(f)\, d\mu \qquad (f\in\Lipo(X,d)),
$$
it is clear that $\widetilde{\gamma}\in\Lipo(X,d)^*$ and $\widetilde{\gamma}(f)=\gamma(f)$ for all $f\in\lipo(X,d)$. 
It remains to show that $\widetilde{\gamma}$ is $\tau_p$-continuous on $B_{\Lipo(X,d)}$. Thus, let $\{f_i\}$ be a net in $B_{\Lipo(X,d)}$ which converges pointwise on $X$ to zero. Then $\{\Phi(f_i)\}$ converges pointwise on $\widetilde{X}$ to zero 
and, since $\left|\Phi(f_i)(x,y)\right|\leq\left\|\Phi(f_i)\right\|_{\infty}=\Lip_d(f_i)\leq 1$ for all $i\in I$ and for all $(x,y)\in\widetilde{X}$, it follows that $\{\widetilde{\gamma}(f_i)\}$ converges to $0$ by the Lebesgue's bounded convergence theorem. This completes the proof of (i).

(ii) Assume that $B_{\lipo(X,d)}$ is $\tau_p$-dense in $B_{\Lipo(X,d)}$. Fix $\gamma\in\FX$ and let $f\in B_{\Lipo(X,d)}$. Then there exists a net $\{f_i\}$ in $B_{\lipo(X,d)}$ which converges to $f$ in the topology of pointwise convergence. Since $\gamma$ is $\tau_p$-continuous on $B_{\Lipo(X,d)}$ and it is satisfied that 
$$
\left|\gamma(f_i)\right|
=\left|R_X(\gamma)(f_i)\right|
\leq\left\|R_X(\gamma)\right\|\Lip_d(f_i)
\leq\left\|R_X(\gamma)\right\|
$$
for all $i\in I$, it follows that $\left|\gamma(f)\right|\leq\left\|R_X(\gamma)\right\|$ and so $\left\|\gamma\right\|\leq\left\|R_X(\gamma)\right\|$. Now, taking into account (i) we conclude that $R_X$ is an isometric isomorphism from $\FX$ onto $\lipo(X,d)^{*}$.

Conversely, if $B_{\lipo(X,d)}$ is not $\tau_p$-dense in $B_{\Lipo(X,d)}$,  
by the Hahn--Banach theorem there exist a function $g\in B_{\Lipo(X,d)}$ and a $\tau_p$-continuous linear functional $\gamma$ on $\Lipo(X,d)$ such that $\left|\gamma(f)\right|\leq 1$ for all $f\in B_{\lipo(X,d)}$ and $\left|\gamma(g)\right|>1$. Since $\gamma\in\FX$ and $\left\|R_X(\gamma)\right\|=\left\|\left.\gamma\right|_{\lipo(X,d)}\right\|\leq 1<\left|\gamma(g)\right|\leq\left\|\gamma\right\|$, then $R_X$ is not an isometry. 
\end{proof}

We are now ready to obtain the main result of this note.

\begin{theorem}\label{messi}
Let $(X,d)$ be a pointed compact metric space. Then the following are equivalent:
\begin{enumerate}
	\item $\Lipo(X,d)$ is isometrically isomorphic to $\lipo(X,d) ^{**}$.
	\item $B_{\lipo(X,d)}$ is dense in $B_{\Lipo(X,d)}$ with respect to the weak* topology.
	\item $B_{\lipo(X,d)}$ is dense in $B_{\Lipo(X,d)}$ with respect to the topology of pointwise convergence.
	\item For each $f\in B_{\Lipo(X,d)}$, there exists a sequence $\{f_n\}$ in $B_{\lipo(X,d)}$ such that $\{f_n(x)\}$ converges to $f(x)$ as $n\to\infty$ for every $x\in X$.
\end{enumerate}
\end{theorem}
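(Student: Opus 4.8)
The plan is to route every implication through Theorem~\ref{messi1} and the adjoint of the restriction map, separating the three ``analytic'' density conditions from the duality statement (i). Abbreviate $\lipo=\lipo(X,d)$, $\Lipo=\Lipo(X,d)$, and let $\kappa\colon\lipo\to\lipo^{**}$ be the canonical embedding. Since $R_X\colon\FX\to\lipo^{*}$ is a nonexpansive surjection by Theorem~\ref{messi1}(i), the open mapping theorem makes its adjoint $R_X^{*}\colon\lipo^{**}\to\FX^{*}$ an injective, bounded-below, nonexpansive map; composing with the isometric isomorphism $Q_X^{-1}\colon\FX^{*}\to\Lipo$ produces a canonical map $\Lambda:=Q_X^{-1}\circ R_X^{*}\colon\lipo^{**}\to\Lipo$. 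A short computation gives $\Lambda\circ\kappa=$ (the inclusion $\lipo\hookrightarrow\Lipo$), and, being an adjoint, $\Lambda$ is continuous from $(\lipo^{**},w^{*})$ into $(\Lipo,w^{*})=(\FX^{*},\sigma(\FX^{*},\FX))$. This $\Lambda$ is the device that converts the density statements into (i).

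First I would dispatch (ii)$\Leftrightarrow$(iii)$\Leftrightarrow$(iv). For (ii)$\Leftrightarrow$(iii): on norm-bounded subsets of $\Lipo=\FX^{*}$ the weak$^{*}$ topology and $\tau_p$ coincide, because the evaluation functionals $\delta_x$ have dense linear span in $\FX$ (Johnson's theorem), so testing a bounded net against every $\delta_x$ is the same as testing it against all of $\FX$; hence $w^{*}$-density and $\tau_p$-density of $B_{\lipo}$ in $B_{\Lipo}$ are the same property. For (iii)$\Leftrightarrow$(iv): each $f\in B_{\Lipo}$ is $1$-Lipschitz with $f(0)=0$, so $B_{\Lipo}$ is equicontinuous and uniformly bounded; as $X$ is compact metric, hence separable, the Arzel\`a--Ascoli theorem shows that on $B_{\Lipo}$ the topology $\tau_p$ agrees with the metrizable topology of uniform convergence, so $\tau_p$-density is witnessed by sequences. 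Both equivalences are routine once this coincidence of topologies on bounded sets is recorded.

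Next, (iii)$\Rightarrow$(i) is immediate: by Theorem~\ref{messi1}(ii) the density makes $R_X$ an isometric isomorphism, hence so is $R_X^{*}$, and therefore $\Lambda=Q_X^{-1}\circ R_X^{*}$ is an isometric isomorphism $\lipo^{**}\cong\Lipo$. The substance lies in the converse. The clean reduction is the Goldstine identity
$$
\overline{B_{\lipo}}^{\,w^{*}}=\Lambda\bigl(B_{\lipo^{**}}\bigr),
$$
which holds unconditionally: Goldstine gives $\overline{\kappa(B_{\lipo})}^{\,w^{*}}=B_{\lipo^{**}}$, and applying the weak$^{*}$-continuous $\Lambda$ (whose image of the weak$^{*}$-compact ball is weak$^{*}$-compact, hence weak$^{*}$-closed) together with $\Lambda\circ\kappa=$ incl.\ yields the displayed equality. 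Consequently (iii) holds if and only if $\Lambda(B_{\lipo^{**}})=B_{\Lipo}$; and since $\Lambda$ is an injective bounded-below map, carrying the unit ball onto the unit ball is equivalent to $\Lambda$ being a surjective isometry, equivalently to $R_X$ being an isometry.

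The hard part will be promoting the isometric identification asserted in (i) to the statement that the canonical $\Lambda$ is onto and isometric. The plan is to use that $\Lambda$ is the adjoint of the surjection $R_X$, so that $\lipo^{**}$ is realized as the weak$^{*}$-closed subspace $\Lambda(\lipo^{**})=(\ker R_X)^{\perp}$ of $\Lipo$, and to argue that an isometric copy of $\Lipo$ sitting in $\Lipo$ as $(\ker R_X)^{\perp}$ forces $\ker R_X=\{0\}$ and $\Lambda$ isometric, i.e.\ forces this weak$^{*}$-closed copy to fill all of $\Lipo$. I expect the delicate point to be this matching of the two dual structures on $\Lipo$, the given predual $\FX$ and the predual $\lipo^{*}$ arising from the bidual representation; the natural tool is again the Goldstine identity above, which ties the size of $\Lambda(B_{\lipo^{**}})$ directly to the norming behaviour of $B_{\lipo}$ inside $B_{\Lipo}$.
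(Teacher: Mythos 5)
Your treatment of (ii)$\Leftrightarrow$(iii)$\Leftrightarrow$(iv) and of (iii)$\Rightarrow$(i) is correct, and in places tighter than the paper's own: the coincidence of the weak* and pointwise topologies on bounded subsets of $\Lipo(X,d)$ (via density of the linear span of the $\delta_x$ in $\FX$) is exactly the Johnson result the paper cites; your Ascoli/metrizability argument for (iii)$\Leftrightarrow$(iv) is cleaner than the paper's neighborhood argument, which as literally written produces a sequence depending on the point $x$ and needs precisely your equicontinuity observation to be repaired; and the Goldstine identity $\overline{B_{\lipo(X,d)}}^{\,w^*}=\Lambda(B_{\lipo(X,d)^{**}})$ is correct and is the right organizing device.

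The genuine gap is (i)$\Rightarrow$(iii), which you yourself flag and leave as a plan, and the plan as stated would fail. From the literal hypothesis that $\Lipo(X,d)$ is isometrically isomorphic to $\lipo(X,d)^{**}$ by some unspecified isomorphism, you propose to conclude $\ker R_X=\{0\}$ and $\Lambda$ isometric from the fact that $\Lambda(\lipo(X,d)^{**})=(\ker R_X)^{\perp}$ is a weak*-closed isometric copy of $\Lipo(X,d)$ sitting inside $\Lipo(X,d)$. No purely formal argument of this kind can succeed: a dual Banach space can be isometric to a proper weak*-closed annihilator subspace of itself --- for instance the shift identifies $\ell_\infty=\ell_1^*$ isometrically with $\{x\in\ell_\infty\colon x_1=0\}=\{e_1\}^{\perp}$ --- so the mere existence of such a copy forces nothing, and any proof must use structure specific to the pair $(\lipo(X,d),\Lipo(X,d))$. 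The paper sidesteps the issue by (implicitly) reading (i) as the statement that the \emph{canonical} map $\Lambda=Q_X^{-1}\circ R_X^{*}$ is an isometric isomorphism; under that reading its one-line appeal to Goldstine is exactly your identity: if $\Lambda$ is isometric and onto, then $\Lambda(B_{\lipo(X,d)^{**}})=B_{\Lipo(X,d)}$, which by your identity says $B_{\lipo(X,d)}$ is weak*-dense in $B_{\Lipo(X,d)}$. So under the canonical interpretation your own machinery already closes the cycle in one line, and you should adopt that interpretation explicitly; under your stricter ``abstract isomorphism'' reading, the implication you call hard remains unproved in your write-up (and, in fairness, is not settled by the paper's proof either, since its Goldstine step silently uses that the isomorphism is the canonical one, hence weak*-to-weak* homeomorphic and compatible with the embedding of $\lipo(X,d)$).
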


\begin{proof}
If (i) holds, then (ii) follows by the Goldstine theorem; but (ii) is the same as (iii) since the weak* topology agrees with the topology of pointwise convergence on bounded subsets of $\Lipo(X,d)$ by \cite[Corollary 4.4]{j70}. If (iii) is true, then $R_X^*$ is an isometric isomorphism from $\lipo(X,d)^{**}$ onto $\FX^*$ by Theorem \ref{messi1}, hence the composition $Q_X^{-1}\circ R_X^*$ is an isometric isomorphism from $\lipo(X,d)^{**}$ onto $\Lipo(X,d)$ and so we obtain (i). 

In order to prove that (ii) is equivalent to (iv), notice that, by \cite[Corollary 4.4]{j70}, the family of sets 
$$
U(f_0;n,x_1,\ldots,x_n,\varepsilon):=\left\{f\in B_{\Lipo(X,d)}\colon \left|f(x_i)-f_0(x_i)\right|<\varepsilon, \; \forall i=1,\ldots,n\right\}
$$
with $f_0\in B_{\Lipo(X,d)}$, $n\in\N$, $x_1,\ldots,x_n\in X$ and $\varepsilon>0$, is a basis of the relative weak* topology on $B_{\Lipo(X,d)}$. 

Suppose now that (ii) holds and let $f_0\in B_{\Lipo(X,d)}$. Given $x\in X$ and $n\in\N$, the set $U(f_0;1,x,1/n)$ is a weak* neighborhood of $f_0$ relative to $B_{\Lipo(X,d)}$. Then, by (ii), for each $n\in\N$ there exists $f_n\in B_{\lipo(X,d)}$ such that $f_n\in U(f_0;1,x,1/n)$, that is, $\left|f_n(x)-f_0(x)\right|<1/n$. Hence $\{f_n(x)\}$ converges to $f_0(x)$ as $n\to\infty$ and we conclude that (ii) implies (iv). Conversely, assume that (iv) is valid and let $f_0\in B_{\Lipo(X,d)}$. Take $U(f_0;p,x_1,\ldots,x_p,\varepsilon)$ with $p\in\N$, $x_1,\ldots,x_p\in X$ and $\varepsilon>0$. By (iv), there is a sequence $\{f_n\}$ in $B_{\lipo(X,d)}$ such that $\{f_n(x)\}$ converges to $f_0(x)$ as $n\to\infty$ for every $x\in X$. In particular, for each $i\in\{1,\ldots,p\}$, there is a $m_i\in\N$ for which $\left|f_n(x_i)-f_0(x_i)\right|<\varepsilon$ whenever $n\geq m_i$. Now, if $m=\max\{m_1,\ldots,m_p\}$, then $f_m\in U(f_0;p,x_1,\ldots,x_p,\varepsilon)$ and (ii) follows. 
\end{proof}

It is known that $\Lipo(X,d)$ is isometrically isomorphic to $\lipo(X,d) ^{**}$ for a large class of metric spaces $(X,d)$ as, for example, the H\"{o}lder spaces $(X,d^\alpha)$, $0<\alpha<1$ \cite{bcd,d,j70}. 

\begin{remark}
The proof of Theorem \ref{messi} shows that if one of its statements holds, then the map $Q_X^{-1}\circ R_X^*$ is an isometric isomorphism from $\lipo(X,d)^{**}$ onto $\Lipo(X,d)$. For any $\phi\in\lipo(X,d)^{**}$ and $x\in X$, an easy verifications yields
\begin{align*}
(Q_X^{-1}\circ R_X^*)(\phi)(x)&=\delta_x((Q_X^{-1}\circ R_X^*)(\phi))\\
                              &=Q_X((Q_X^{-1}\circ R_X^*)(\phi))(\delta_x)\\
                              &=Q_X(Q_X^{-1}(R_X^*(\phi)))(\delta_x)\\
                              &=R_X^*(\phi)(\delta_x)\\
                              &=\phi(R_X(\delta_x))\\
                              &=\phi(\delta_x).
\end{align*}
This identification is the same as that obtained by De Leeuw \cite{d}, Johnson \cite{j70} and Bade, Curtis and Dales \cite{bcd} between the spaces $\Lipo(X,d^\alpha)$ and $\lipo(X,d^\alpha)^{**}$ $(0<\alpha<1)$.
\end{remark}

The pointwise approximation condition given by the assertion (iv) of Theorem \ref{messi} can be verified to recover two classical results about the biduality problem of $\Lipo(X,d^\alpha)$ $(0<\alpha<1)$. The former is due to Ciesielski \cite{ci} and the latter to De Leeuw \cite{d}. 

\begin{example}
Let $\alpha\in (0,1)$ and let $[0,1]$ be the unit interval with the usual metric $d$. Then $\Lipo([0,1],d^\alpha)$ is isometrically isomorphic to $\lipo([0,1],d^\alpha)^{**}$.
\end{example}

\begin{proof}
Fix $f\in B_{\Lipo([0,1],d^\alpha]}$ and, for each $n\in\N$, let $B_n(f,\cdot)$ denote the $n$th Bernstein polynomial for $f$ defined by
$$
B_n(f,x)=\sum_{k=0}^{n}f\left(\frac{k}{n}\right)\binom{n}{k}x^k(1-x)^{n-k}\qquad (x\in [0,1]).
$$
Then $B_n(f,\cdot)$ also belongs to $B_{\Lipo([0,1],d^\alpha)}$ (see \cite{bep} for an elementary proof) while the fact that 
\begin{align*}
\left|B_n(f,x)-B_n(f,y)\right|
&\leq\sum_{k=0}^{n}\left|f\left(\frac{k}{n}\right)\right|\binom{n}{k}\left|x^k(1-x)^{n-k}-y^k(1-y)^{n-k}\right|\\
&\leq\left|x-y\right|\sum_{k=0}^{n}\left|f\left(\frac{k}{n}\right)\right|\binom{n}{k}2n
\end{align*}
for all $x,y\in [0,1]$ shows that $B_n(f,\cdot)\in B_{\lipo([0,1],d^\alpha)}$. Since $\left\{B_n(f,\cdot)\right\}_{n\in\N}$ converge to $f$ uniformly on $[0,1]$, the example is proved by Theorem \ref{messi}.
\end{proof}

\begin{example}
Let $0<\alpha<1$ and let $\T$ be the quotient additive group $\R/2\pi\Z$ with the distance 
$$
d(t+2\pi\Z,s+2\pi\Z)=\min\left\{\left|t-s\right|,\left|t-s-2\pi\right|,\left|t-s+2\pi\right|\right\}\qquad \left(t,s\in [0,2\pi)\right).
$$
Then $\Lipo(\T,d^\alpha)$ is isometrically isomorphic to $\lipo(\T,d^\alpha)^{**}$.
\end{example}

\begin{proof}
We apply similar arguments to those of \cite[Lemma 2.8]{d} and use some results from harmonic analysis (see \cite{ka}). We identify each equivalence class $t+2\pi\Z$ with the point $t\in [0,2\pi)$. Let $f\in B_{\Lipo(\T,d^\alpha)}$. For each $n\in\N$, let $K_n$ be the Fej\'{e}r kernel defined by
$$
K_n(t)=\sum_{j=-n}^{n}\left(1-\frac{\left|j\right|}{n+1}\right)e^{ijt}=\frac{1}{n+1}\left(\frac{\sin\frac{n+1}{2}}{\sin\frac{t}{2}}\right)^2\qquad (t\in [0,2\pi)).
$$
Then the convolution 
$$
(K_n\ast f)(t)=\frac{1}{2\pi}\int_{0}^{2\pi}K_n(\tau)f(t-\tau)\ d\tau \qquad (t\in [0,2\pi))
$$
coincides with the Fej\'{e}r mean 
$$
\sigma_n(f,t)=\sum_{j=-n}^{n}\left(1-\frac{\left|j\right|}{n+1}\right)\widehat{f}(j)e^{ijt}\qquad (t\in [0,2\pi)),
$$
where $\widehat{f}(j)$ is the $j$th Fourier coefficient of $f$. Given $t,s\in [0,2\pi)$, we have
\begin{align*}
\left|\sigma_n(f,t)-\sigma_n(f,s)\right|
&\leq\sum_{j=-n}^{n}\left|1-\frac{\left|j\right|}{n+1}\right||\widehat{f}(j)|\left|e^{ijt}-e^{ijs}\right|\\
&\leq\sum_{j=-n}^{n}\left|1-\frac{\left|j\right|}{n+1}\right|\frac{\pi{^\alpha}|j|^{1-\alpha}}{2}(4\pi n)^n (e-1)d(t,s)
\end{align*}
and therefore $\sigma_n(f,\cdot)\in\lipo(\T,d^\alpha)$. 
Moreover,  
\begin{align*}
\left|\sigma_n(f,t)-\sigma_n(f,s)\right|
&=\left|(K_n\ast f)(t)-(K_n\ast f)(s)\right|\\
&\leq \frac{1}{2\pi}\int_{0}^{2\pi}\left|K_n(\tau)\right|\left|f(t-\tau)-f(s-\tau)\right|\ d\tau\\
&\leq \Lip_{d^\alpha}(f)d(t,s)^\alpha\frac{1}{2\pi}\int_{0}^{2\pi}K_n(\tau)\ d\tau\\
&=\Lip_{d^\alpha}(f)d(t,s)^\alpha ,
\end{align*}
and so $\Lip_{d^\alpha}(\sigma_n(f,\cdot))\leq \Lip_{d^\alpha}(f)\leq 1$. Now take $\beta_n(f,\cdot)=\sigma_n(f,\cdot)-\sigma_n(f,0)$ which is in $B_{\lipo(\T,d^\alpha)}$. By the Fej\'{e}r theorem, 
$\left\{\sigma_n(f,\cdot)\right\}_{n\in\N}$ converges pointwise on $\T$ to $f$, and so does $\left\{\beta_n(f,\cdot)\right\}_{n\in\N}$. Then the desired conclusion follows from Theorem \ref{messi}.
\end{proof}

Our density criterion serves to give another proof of the real version of an important result by Johnson \cite[Theorem 4.7]{j70} and Bade, Curtis and Dales \cite[Theorem 3.5]{bcd}. 

\begin{corollary}
Let $(X,d)$ be a pointed compact metric space and let $\alpha\in (0,1)$. Then $\Lipo^\R(X,d^\alpha)$ is isometrically isomorphic to $\lipo^\R(X,d^\alpha)^{**}$.
\end{corollary}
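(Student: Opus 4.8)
The plan is to verify condition (iv) of Theorem~\ref{messi} for the metric $d^\alpha$, working throughout with real-valued functions so that the lattice structure of $\R$ is available; this is precisely why the argument is confined to the real case, since the infimum appearing below has no scalar-valued analogue. The starting observation is that any function $g$ which is Lipschitz for $d$ automatically lies in $\lipo^\R(X,d^\alpha)$, because
$$
\frac{|g(x)-g(y)|}{d(x,y)^\alpha}\leq \Lip_d(g)\,d(x,y)^{1-\alpha}\longrightarrow 0
$$
as $d(x,y)\to 0$, the exponent $1-\alpha$ being positive. Thus it suffices, given $f\in B_{\Lipo^\R(X,d^\alpha)}$, to produce a sequence of $d$-Lipschitz functions $f_n$ with $\Lip_{d^\alpha}(f_n)\leq 1$ converging pointwise to $f$.

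To this end I would set, for each $n\in\N$,
$$
\psi_n(r)=\min\{nr,\,r^\alpha\}\quad(r\geq 0),\qquad f_n(x)=\inf_{y\in X}\bigl(f(y)+\psi_n(d(x,y))\bigr)\quad(x\in X).
$$
Since $X$ is compact and $f$ is bounded, each infimum is finite and attained, so $f_n$ is a well-defined real-valued function with $f_n\leq f$ (take $y=x$). The auxiliary function $\psi_n$ is nondecreasing, vanishes at $0$, and is \emph{concave}: it is linear of slope $n$ on $[0,r_0]$ with $r_0=n^{-1/(1-\alpha)}$ and equals $r^\alpha$ thereafter, the slope dropping from $n$ to $\alpha n<n$ at $r_0$. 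In particular $\psi_n$ is subadditive, and $\psi_n(r)\leq r^\alpha$ as well as $\psi_n(r)\leq nr$ hold for all $r\geq 0$.

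The key step, which I expect to be the main obstacle, is the bound $\Lip_{d^\alpha}(f_n)\leq 1$; this is exactly where the shape of $\psi_n$ and its subadditivity enter. Choosing $y_\ast$ that attains the infimum defining $f_n(x')$ and using the triangle inequality together with the monotonicity and subadditivity of $\psi_n$,
$$
f_n(x)\leq f(y_\ast)+\psi_n\bigl(d(x,x')+d(x',y_\ast)\bigr)\leq f_n(x')+\psi_n(d(x,x')),
$$
so by symmetry $|f_n(x)-f_n(x')|\leq\psi_n(d(x,x'))$. Since $\psi_n(r)\leq r^\alpha$ this yields $\Lip_{d^\alpha}(f_n)\leq 1$, while $\psi_n(r)\leq nr$ yields $\Lip_d(f_n)\leq n$; hence $f_n$ is $d$-Lipschitz and therefore belongs to $B_{\lipo^\R(X,d^\alpha)}$ by the first paragraph.

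It remains to check convergence. For $y$ with $d(x,y)\geq r_0$ one has $f(y)+\psi_n(d(x,y))=f(y)+d(x,y)^\alpha\geq f(x)$ from $\Lip_{d^\alpha}(f)\leq 1$, while for $d(x,y)<r_0$ one has $f(y)+\psi_n(d(x,y))\geq f(x)-d(x,y)^\alpha\geq f(x)-r_0^\alpha$; combining these,
$$
f(x)-n^{-\alpha/(1-\alpha)}\leq f_n(x)\leq f(x),
$$
so $f_n\to f$ uniformly on $X$, and in particular $f_n(0)\to f(0)=0$. Finally I would normalize by passing to $g_n=f_n-f_n(0)$, which still satisfies $\Lip_{d^\alpha}(g_n)\leq 1$, lies in $B_{\lipo^\R(X,d^\alpha)}$, and converges pointwise to $f$. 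Condition (iv) of Theorem~\ref{messi} is thereby verified, and the corollary follows.
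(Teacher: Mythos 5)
Your proof is correct, and it takes a genuinely different route from the paper's. The paper verifies condition (iv) of Theorem \ref{messi} by exact interpolation on finite $\varepsilon$-nets: for each finite set it builds $h=\max_j\max\{f(x_j)-\rho\, d(x_j,\cdot)^\gamma,0\}$ with a carefully chosen exponent $\gamma>\alpha$, obtaining $\Lip_{d^\alpha}(h)\leq 1+1/n$ and $h=f$ on the net, and then renormalizes. You instead regularize $f$ globally by the infimal convolution $f_n(x)=\inf_{y\in X}\bigl(f(y)+\psi_n(d(x,y))\bigr)$ with the concave, subadditive kernel $\psi_n(r)=\min\{nr,r^\alpha\}$, a Pasch--Hausdorff/McShane-type construction. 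Both arguments rest on the same structural fact that bumping the H\"{o}lder exponent lands you in the little Lipschitz space --- you use $\Lip^\R(X,d)\subset\lip^\R(X,d^\alpha)$ directly with exponent $1$, the paper uses $\Lip^\R(X,d^\gamma)\subset\lip^\R(X,d^\alpha)$ for $\gamma$ slightly above $\alpha$ --- and both are intrinsically real-scalar (the paper's $\max$, your $\inf$), which is exactly why the corollary is stated for $\Lipo^\R$. What your approach buys: the approximants satisfy $\Lip_{d^\alpha}(f_n)\leq 1$ on the nose (no $1+1/n$ slack, so no division by $\max\{1,\Lip_{d^\alpha}(h_n)\}$), you get \emph{uniform} convergence with the explicit rate $n^{-\alpha/(1-\alpha)}$ rather than pointwise convergence through covering arguments, and you avoid the paper's delicate choice of $\gamma$ and the minimization of $t\mapsto(t/\diam(X))^{ct}$; compactness enters only to have the infimum attained, and even that could be dispensed with by taking near-minimizers. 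What the paper's approach buys: exact agreement $h_n=f$ on the finite nets $F_n$, which is sometimes useful in itself. One small slip in your write-up: before the final step, $f_n$ need not vanish at the base point, so strictly speaking it satisfies the flatness condition and $\Lip_{d^\alpha}(f_n)\leq 1$ but is not yet an element of $B_{\lipo^\R(X,d^\alpha)}$; your normalization $g_n=f_n-f_n(0)$ repairs this, since uniform convergence gives $f_n(0)\to f(0)=0$ and hence $g_n\to f$ pointwise, so the argument as a whole is sound.
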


\begin{proof}
Let $f\in B_{\Lipo^\R(X,d^\alpha)}$. We claim that for each $n\in\N$ and each finite set $F\subset X$, there exists a function $h\in\lip^\R(X,d^\alpha)$ such that $\Lip_{d^\alpha}(h)\leq 1+1/n$ and $h(x)=f(x)$ for all $x\in F$. The notation $\lip^\R(X,d^\alpha)$ and later $\Lip^\R(X,{d^\gamma})$ might be self-explanatory.

Consider $F=\{x_1,\ldots,x_m\}$ for some $m\in\N$ and there is no loss of generality in assuming that $f(x_m)\leq f(x_{m-1})\leq\ldots\leq f(x_1)$. If $m=1$, we set $h(x)=f(x_1)$ for all $x\in X$. Now let $m\geq 2$ and we also may assume $f\geq 0$, for otherwise we can replace $f$ by $f+\left\|f\right\|_\infty$. Let
\begin{align*}
\gamma &=\min\left(\left\{\alpha+\frac{e\ln(1+\frac{1}{n})}{\diam(X)}d(x_j,x_k)\colon j,k\in\{1,\ldots,m\},\; j\neq k\right\}\cup \left\{1\right\}\right),\\
\rho &=\max\left\{\frac{\left|f(x_k)-f(x_j)\right|}{d(x_k,x_j)^\gamma}\colon j,k\in\{1,\ldots,m\},\; j\neq k\right\}.
\end{align*}
For each $j\in\{1,\ldots,m\}$, define $g_j\colon X\to\R$ by 
$$
g_j(x)=\max\left\{f(x_j)-\rho d(x_j,x)^\gamma , 0\right\}.
$$
Notice that $0<\alpha<\gamma\leq 1$ and therefore $g_j\in\Lip^\R(X,d^\gamma)\subset\lip^\R(X,d^\alpha)$ with 
$$
\Lip_{d^\alpha}(g_j)\leq\Lip_\gamma(g_j)\diam(X)^{\gamma-\alpha}\leq\rho\,\diam(X)^{\gamma-\alpha}.
$$
We now check that the function $h=\max\left\{g_1,\ldots ,g_m\right\}$ satisfies the required conditions. It is known that $h$ is in $\lip^\R(X,d^\alpha)$ and $\Lip_{d^\alpha}(h)\leq\max\left\{\Lip_{d^\alpha}(g_1),\ldots,\Lip_{d^\alpha}(g_m)\right\}$. Now, given $j\in \{1,\ldots,m\}$, for some $k,i\in\{1,\ldots,m\}$ with $k\neq i$, we have
\begin{align*}
\Lip_{d^\alpha}(g_j)&\leq\rho\,\diam(X)^{\gamma-\alpha}\\
             &=\frac{\left|f(x_k)-f(x_i)\right|}{d(x_k,x_i)^\gamma}\diam(X)^{\gamma-\alpha}\\
             &\leq \Lip_{d^\alpha}(f)\left(\frac{\diam(X)}{d(x_k,x_i)}\right)^{\gamma-\alpha}\\
             &\leq \left(\frac{\diam(X)}{d(x_k,x_i)}\right)^{\frac{e\ln(1+\frac{1}{n})}{\diam(X)}d(x_k,x_i)}\\
&\leq 1+\frac{1}{n}.
\end{align*}
The last inequality follows from the fact that the function $t\mapsto(t/\diam(X))^{te\ln(1+1/n)/\diam(X)}$ for all $t>0$
has a minimum value of $1/(1+1/n)$. Hence $\Lip_{d^\alpha}(h)\leq 1+1/n$ as required. Now let $j,k\in \{1,\ldots,m\}$. If $j\leq k$, it is immediate that $g_k(x_j)\leq f(x_k)\leq f(x_j)=g_j(x_j)$, whereas that if $k<j$, we have $\left|f(x_k)-f(x_j)\right|/d(x_k,x_j)^\gamma\leq \rho$, hence $f(x_k)-\rho d(x_k,x_j)^\gamma\leq f(x_j)$ and thus $g_k(x_j)\leq g_j(x_j)$. Therefore $h(x_j)=g_j(x_j)=f(x_j)$ for all $j\in \{1,\ldots,m\}$. The claim follows.
 
Now fix $n\in\N$ and, for each $x\in X$, let 
$B(x,1/n)=\left\{y\in X\colon d(y,x)^\alpha<1/n\right\}$. By the compactness of $X$, there is a finite subset $F_n$ of $X$ such that $X=\bigcup_{x\in F_n}B(x,1/n)$. We can suppose that the base point $0\in X$ is in $F_n$, for otherwise take the finite set $F_n\cup\{0\}$. By the claim, there exists a function $h_n\in\lip^\R(X,d^\alpha)$ such that $\Lip_{d^\alpha}(h_n)\leq 1+1/n$ and $h_n(x)=f(x)$ for all $x\in F_n$. Hence $h_n\in\lipo^\R(X,d^\alpha)$. To prove that the sequence $\{h_n\}$ converges pointwise on $X$ to $f$, let $x\in X$. For each $n\in\N$, choose $y_n\in F_n$ such that $d(x,y_n)^\alpha<1/n$. Note that $h_n(y_n)=f(y_n)$ and thus
\begin{align*}
\left|f(x)-h_n(x)\right|&\leq \left|f(x)-f(y_n)\right|+\left|f(y_n)-h_n(x)\right|\\
                          &\leq \left|f(x)-f(y_n)\right|+\left|h_n(y_n)-h_n(x)\right|\\
                          &\leq \left(\Lip_{d^\alpha}(f)+\Lip_{d^\alpha}(h_n)\right)d(x,y_n)^\alpha\\
                          &\leq \left(2+\frac{1}{n}\right)\frac{1}{n}.
\end{align*}
Hence the sequence $\{h_n(x)\}$ converges to $f(x)$ as $n\to\infty$. Finally, let $r_n=\max\{1,\Lip_{d^\alpha}(h_n)\}$ and $f_n=h_n/r_n$ for each $n\in\N$. It is clear that $\{f_n\}$ is a sequence in $B_{\lipo^\R(X,d^\alpha)}$ that converges pointwise to $f$ on $X$. Then the corollary follows from Theorem \ref{messi}.
\end{proof}

\bibliographystyle{amsplain}

\end{document}